\documentclass[11pt]{amsart}
\usepackage[T1]{fontenc}
\usepackage[utf8]{inputenc}
\usepackage{lmodern,amsmath,amssymb,amsthm,mathtools,booktabs,microtype}
\usepackage[margin=1.05in]{geometry}
\usepackage{xcolor}
\usepackage[colorlinks=true,linkcolor=blue!50!black,citecolor=blue!50!black,urlcolor=blue!50!black]{hyperref}
\hypersetup{pdftitle={Generation by lazy and invertible cellular automata over the Klein four-group},pdfauthor={Edgar Alcala-Arroyo, Alonso Castillo-Ramirez, Haobo Ma, Wenlin Zhang}}
\newtheorem{theorem}{Theorem}[section]
\newtheorem{proposition}[theorem]{Proposition}
\newtheorem{lemma}[theorem]{Lemma}
\newtheorem{corollary}[theorem]{Corollary}
\theoremstyle{definition}
\newtheorem{definition}[theorem]{Definition}
\newtheorem{example}[theorem]{Example}
\newtheorem{remark}[theorem]{Remark}
\DeclareMathOperator{\CA}{CA}
\DeclareMathOperator{\ICA}{ICA}
\DeclareMathOperator{\End}{End}
\DeclareMathOperator{\Aut}{Aut}
\DeclareMathOperator{\supp}{supp}

\newcommand{\id}{\mathrm{id}}
\newcommand{\one}{\mathbf{1}}
\newcommand{\cC}{\mathcal C}
\newcommand{\cF}{\mathcal F}
\newcommand{\cK}{\mathcal K}
\newcommand{\cN}{\mathcal N}
\newcommand{\cU}{\mathcal U}
\newcommand{\cL}{\mathcal L}
\newcommand{\cM}{\mathcal M}
\newcommand{\cD}{\mathcal D}
\newcommand{\cE}{\mathcal E}
\newcommand{\cQ}{\mathcal Q}
\newcommand{\eps}{\varepsilon}

\title[Generation over the Klein four-group]{Generation by lazy and invertible cellular automata over the Klein four-group}
\author{Edgar Alcal\'{a}-Arroyo}
\address[E. Alcal\'{a}-Arroyo]{Universidad de Guadalajara, Centro Universitario de Ciencias Exactas e Ingenier\'{i}as, Departamento de Matem\'{a}ticas. Guadalajara, M\'{e}xico}
\email{edgar.alcala7434@alumnos.udg.mx}
\author{Alonso Castillo-Ramirez}
\address[A. Castillo-Ramirez]{Universidad de Guadalajara, Centro Universitario de Ciencias Exactas e Ingenier\'{i}as, Departamento de Matem\'{a}ticas. Guadalajara, M\'{e}xico}
\email{alonso.castillor@academicos.udg.mx}
\author{Haobo Ma}
\address[H. Ma]{ChronoAI Pte Ltd; The Omega Institute}
\email{auric@aelf.io}
\author{Wenlin Zhang}
\address[W. Zhang]{National University of Singapore (NUS); The Omega Institute}
\email{e1327962@u.nus.edu}
\date{\today}
\subjclass[2020]{Primary 37B15, 68Q80; Secondary 20M20, 68V20}
\keywords{Cellular automaton, lazy cellular automaton, Klein four-group, equivariant transformation, generated submonoid, formal verification}
\begin{document}
\begin{abstract}
Let $G=C_2\times C_2$ and $A=\{0,1\}$. We characterize the submonoid of $\CA(G;A)$ generated by lazy and invertible cellular automata. A cellular automaton belongs to this submonoid if and only if, whenever it maps the two free $G$-orbits onto the same free orbit, it maps both constant configurations and some two-point orbit to the same constant. The proof gives a factorization using elementary orbit collapses and three two-cell lazy maps. The criterion yields the cardinality $59\,712$ and identifies the maps missed by a weaker constant-separation invariant. Adjoining one idempotent generates the full monoid, and the generated submonoid is contained in exactly one maximal proper submonoid, of cardinality $65\,024$. The submonoid generated by lazy maps alone has $4\,007$ elements, and for both generating sets the maximum shortest-word length is six. The criterion and the cardinalities $59\,712$ and $4\,007$ are formally verified in Lean.
\end{abstract}
\maketitle

\section{Introduction}

A lazy cellular automaton acts as the identity except at the cells where one prescribed pattern occurs, where it changes the central symbol. Alcal\'{a}-Arroyo and Castillo-Ramirez~\cite[Problem~2]{order} asked whether lazy and invertible cellular automata generate the full cellular-automaton monoid. The binary full shift over the Klein four-group provides an obstruction, as shown in~\cite{generation}, which studies equality with the full monoid for general groups and alphabets, including a classification when both are finite. Here we describe the proper generated submonoid in the binary Klein-four case, including its elements, cardinality, and word lengths.

Put $G=C_2\times C_2$ and $X=\{0,1\}^G$. As $G$ is finite, the monoid $\CA(G;\{0,1\})$ of all cellular automata over $X$ coincides with the endomorphism monoid of the $G$-set $X$ with respect to the shift action~\cite{finite}: that is, the set of all $G$-equivariant transformations $T : X \to X$ equipped with composition. Write $\cL$ for the set of lazy cellular automata over $X$, $\cU=\ICA(G;\{0,1\})$ for the group of invertible cellular automata over $X$, and
\[
\cM=\langle\cU\cup\cL\rangle.
\]
Generated submonoids include the identity, represented by the empty word. The action of $G$ on $X$ has two fixed points, three two-point orbits, and two free orbits. The free orbits consist of the configurations of weights one and three, respectively.

\begin{theorem}\label{thm:main}
Let $T\in\CA(G;\{0,1\})$. Then $T\in\cM$ if and only if the following condition holds: if $T$ maps the two free orbits onto the same free orbit, then it maps both constant configurations and some two-point orbit to the same constant. Moreover,
\[
|\cM|=59\,712.
\]
\end{theorem}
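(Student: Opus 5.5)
Throughout, I use the standard description of $\End(X)$ for a finite $G$-set. An equivariant $T$ is determined by choosing, for each orbit representative $x$, an image $T(x)$ with $\mathrm{Stab}(x)\le\mathrm{Stab}(T(x))$.

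So the data are as follows. The two fixed points $\mathbf 0,\one$ go to fixed points. Each two-point orbit $\{x,x'\}$, with stabilizer $H_i$ of order $2$, goes to a fixed point or onto an orbit with the same stabilizer; in $X$ that orbit is unique, namely itself. Each free orbit goes anywhere, with $|X|=16$ choices of image of the representative.

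From this, $|\CA(G;\{0,1\})| = 2^2\cdot 4^3\cdot 16^2 = 65\,536$. The invertible group $\cU$ consists of maps that are bijective on orbits of each type: $\cU\cong (C_2^{\,2}\wr\text{stuff})$. Concretely, $|\cU| = 2\cdot 2^3\cdot (4^2\cdot 2)$, since it may swap the fixed points, act on each two-point orbit by $\id$ or the swap, and permute the free orbits with a choice of $G$-translate.

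The plan is to work entirely in this orbit-level model and prove the two inclusions separately, then count.

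\textbf{Necessity.} I would show that the set $\cN$ of maps satisfying the condition is a submonoid containing $\cU\cup\cL$. For $\cU$ this is immediate, since an invertible map never sends both free orbits to one free orbit. For a lazy map $T$ I would enumerate the possible patterns. A lazy map changes the central symbol only where a fixed pattern on a neighborhood occurs, so it collapses at most the orbits of configurations displaying that pattern. In particular, it can merge the two free orbits into one only if the pattern is such that flipping turns weight-one configurations into weight-zero or weight-two ones, not weight-three ones. I would check directly that a lazy map never maps a free orbit onto the other free orbit. For closure under composition, suppose $ST$ maps both free orbits onto one free orbit. Then $T$ already maps both free orbits into free orbits, and either $T$ merges them or $S$ merges the two images. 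In the first case $T$ sends both constants and some two-point orbit to a common constant $c$, and then $S$ sends them all to $S(c)$. In the second case, $T$ either preserves both free orbits or swaps them, and I must transport the witness through $T$. This needs the observation that $T$ maps constants to constants and two-point orbits to constants or themselves. That observation is the delicate point: if $T$ sends a two-point orbit to a constant different from where it sends the constants, the witness can break. So I expect the invariant actually needs to be verified for $S\circ T$ by a small case analysis, perhaps after strengthening the invariant.

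\textbf{Sufficiency.} Here I would give an explicit factorization. First, define elementary collapses: maps that are the identity except on one orbit, which they send onto a single other orbit of compatible stabilizer. Show that each such collapse lies in $\cM$, either directly as a lazy map or as a conjugate of one of the three two-cell lazy maps by elements of $\cU$. Then factor an arbitrary $T\in\cN$ by handling orbits in order: free orbits first, then two-point orbits, then constants. Use $\cU$ to pre- and post-normalize, and use collapses to realize the non-bijective parts. The condition is used exactly when both free orbits land in one free orbit. The merging step then requires routing through a collapse that simultaneously identifies the constants and a two-point orbit, and I expect this to be the main obstacle.

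\textbf{Counting.} Finally, $|\cM|=|\cN|$ is counted by inclusion–exclusion. Take the complement: the free parts merge (with $16\cdot 4\cdot$ appropriate counts) and the constant/two-point coincidence fails. Subtracting this from $65\,536$ should give $59\,712$, which I would cross-check by computer enumeration.
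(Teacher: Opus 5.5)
Your lazy-map step for necessity rests on a false claim, and it is the same map that is missing from your sufficiency argument. The two-cell lazy map $B_s=\lambda_{H_s,1^{H_s}}$, i.e.\ $B_s(x)(g)=x(g)(1-x(gs))$, fixes every $\delta_u$ and sends $\bar\delta_u$ to $\delta_{us}$. So it maps $O_3$ onto $O_1$ and \emph{does} merge the free orbits. Its conjugate by $J(x)=\bar x$ maps $O_1$ onto $O_3$. This matters beyond the lazy case. Your composition argument shows that maps not merging the free orbits form a submonoid. So if no lazy map merged them, nothing in $\cM$ would merge them, and sufficiency in the merging case would be impossible.

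What you need instead is a classification of the merging lazy maps. A nonconstant pattern occurs at most once in an odd-weight configuration, so a free configuration with free image is fixed. For the pattern $0^N$ (the case $1^N$ is conjugate by $J$), a merger forces $O_3$ to be fixed. Moreover, $\delta_u$ gains exactly $4-|N|$ ones, so $|N|=2$. Hence the merging lazy maps are exactly $B_s$ and $JB_sJ$. One then checks directly that $B_s$ sends $\cC\cup P_s$ to $c_0$ and fixes $P_t$ for $t\neq s$.

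Your worry about closure is unfounded, and no strengthening is needed. If $S(\cC\cup P_s)=\{c_i\}$, then $T(\cC\cup P_s)\subseteq\cC\cup P_s$ by the stabilizer inclusion, so $S\circ T(\cC\cup P_s)=\{c_i\}$ with the same witness.

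In the sufficiency direction, your claim that every elementary collapse lies in $\cM$ is false as stated. The collapse of $O_3$ onto $O_1$ (complement each weight-three configuration, fix everything else) merges the free orbits while fixing both constants. It therefore violates the very criterion you are proving. Only collapses into non-free orbits are available. These come from full-neighborhood lazy maps $\lambda_{G,p}$, which alter only the orbit of $p$, conjugated by units. They do not come from the two-cell maps, which act on three orbits at once.

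More seriously, you leave the merging case open as the ``main obstacle''. The resolution is $B_s$ itself, which is exactly the ``collapse that simultaneously identifies the constants and a two-point orbit''. Suppose $T$ merges the free orbits and $T(\cC\cup P_s)=\{c_i\}$. Put $Q=T(O_1)$, choose an equivariant bijection $r\colon O_1\to Q$, and let $b_j=B_s|_{O_j}\colon O_j\to O_1$. Then $T=RB_s\alpha$, where:
\begin{itemize}
\item $\alpha\in\cU$ is the identity off $\cF$ and equals $b_j^{-1}r^{-1}T|_{O_j}$ on $O_j$;
\item $R$ agrees with $T$ off $\cF$ and is an equivariant permutation of $\cF$ extending $r$.
\end{itemize}
Since $R$ does not merge the free orbits, your collapse argument handles it. For a general non-merging map, realize the part on $X\setminus\cF$ first and compose the free-orbit collapses on the left, since their targets lie in $X\setminus\cF$.

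Finally, the count must actually be carried out. Exactly $2\cdot4^2=32$ restrictions to $\cF$ merge (not $16\cdot 4$). Exactly $2\cdot4^3+2\cdot3^3=182$ restrictions to $X\setminus\cF$ violate the condition: either the constants have distinct images, or both go to $c_i$ and no $P_s$ goes to $c_i$. This gives $65\,536-32\cdot182=59\,712$.
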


Formally, the condition in Theorem~\ref{thm:main} is~\eqref{eq:criterion} in Section~\ref{sec:criterion}. In particular, $\cM\subsetneq\CA(G;\{0,1\})$, giving a negative answer to~\cite[Problem~2]{order}. The criterion and the cardinalities of $\cM$ and $\langle\cL\rangle$ are also formally verified in Lean~4~\cite{lean4}, using Mathlib~\cite{mathlib}; see the supplementary material~\cite{supp}.

We also show that one additional idempotent generates the full monoid and that $\cM$ lies in a unique maximal proper submonoid; see Proposition~\ref{prop:relative-rank} and Theorem~\ref{thm:maximal}.

The proof uses elementary collapses between $G$-orbits. This approach is related to generation of endomorphism monoids of finite $G$-sets~\cite{rank} and to the structure of cellular-automaton monoids over finite groups~\cite{finite}. We give the required constructions explicitly. In particular, the proof of Theorem~\ref{thm:main} does not depend on exhaustive enumeration.

Section~\ref{sec:orbits} describes the orbits and the elementary collapses. Section~\ref{sec:criterion} proves the criterion and a factorization result. Section~\ref{sec:count} gives the count and compares the criterion with a weaker constant-separation invariant. Section~\ref{sec:maximal} establishes the relative rank and describes the maximal proper submonoid containing $\cM$. Section~\ref{sec:computation} records the exact computation of $\langle\cL\rangle$, the word lengths in both submonoids, and the formal verification in Lean.

\section{Orbits and elementary collapses}\label{sec:orbits}

Throughout, let
\[
G=\{e,a,b,ab\},\qquad a^2=b^2=e,\qquad ab=ba,
\]
and let $X=\{0,1\}^G$. We use the action
\[
(g\cdot x)(h)=x(hg)\qquad(g,h\in G,\ x\in X).
\]
For $x\in X$, write $Gx=\{g\cdot x:g\in G\}$ and $G_x=\{g\in G:g\cdot x=x\}$. Define $w(x)=|\supp(x)|$, where $\supp(x)=\{g\in G:x(g)=1\}$. For $D\subseteq G$, let $\one_D$ be its characteristic configuration, and put $\delta_g=\one_{\{g\}}$. Write $c_i$ for the constant configuration with value $i$, and $\bar x=1-x$.

A map $T:X\to X$ is a \emph{cellular automaton} if there exist $S\subseteq G$ and $\mu:\{0,1\}^S\to\{0,1\}$ such that
\[
T(x)(g)=\mu((g\cdot x)|_S).
\]
Since $G$ is finite, these are precisely the $G$-equivariant transformations: the converse follows by taking $S=G$ and $\mu(x)=T(x)(e)$. Thus
\[
\CA(G;\{0,1\})=\End_G(X),\qquad \cU=\Aut_G(X).
\]
We compose maps from right to left. Equivariance implies
\begin{equation}\label{eq:stab}
G_x\leq G_{T(x)}\qquad(x\in X).
\end{equation}

\begin{definition}
For $e\in S\subseteq G$ and $p\in\{0,1\}^S$, the \emph{lazy cellular automaton} $\lambda_{S,p}$ is defined by
\[
\lambda_{S,p}(x)(g)=
\begin{cases}
1-x(g),&(g\cdot x)|_S=p,\\
x(g),&(g\cdot x)|_S\ne p.
\end{cases}
\]
The set $S$ is its active neighborhood and $p$ is its active pattern. Put $\cL_G=\{\lambda_{G,p}:p\in X\}$.
\end{definition}

For $s\in G\setminus\{e\}$, put $H_s=\{e,s\}$ and
\[
P_s=G\one_{H_s},\qquad O_i=\{x\in X:w(x)=i\}\quad(i=1,3).
\]
Also write $\cC=\{c_0,c_1\}$, $\cF=O_1\cup O_3$, and $Y=X\setminus\cF$.

\begin{lemma}\label{lem:orbits}
The orbit decomposition of $X$ is
\[
X=\{c_0\}\sqcup\{c_1\}\sqcup P_a\sqcup P_b\sqcup P_{ab}\sqcup O_1\sqcup O_3.
\]
The stabilizers on these orbits are $G,G,H_a,H_b,H_{ab},\{e\},\{e\}$, respectively. Consequently, for every $T\in\End_G(X)$,
\begin{equation}\label{eq:lower}
T(\cC)\subseteq\cC,\qquad T(P_s)\subseteq P_s\cup\cC,\qquad T(Y)\subseteq Y.
\end{equation}
Moreover,
\[
|\End_G(X)|=2^{16}=65\,536,\qquad |\cU|=2!\,2^3\,(2!\,4^2)=512.
\]
\end{lemma}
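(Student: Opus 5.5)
I would begin by classifying configurations by weight and computing stabilizers directly. Weight $0$ and weight $4$ give $c_0,c_1$, both fixed by all of $G$ since the action only permutes coordinates. Weight $1$ gives the four configurations $\delta_g$; here $h\cdot\delta_g=\delta_{gh^{-1}}$, so $G$ acts regularly and $O_1$ is a single free orbit. The same holds for $O_3$ via complementation $x\mapsto\bar x$, which commutes with the action. For weight $2$, the six two-element subsets of $G$ are exactly the cosets $gH_s$ of the three subgroups $H_s$ of order two. The configuration $\one_{H_s}$ is fixed by $g$ exactly when $H_s g=H_s$, that is, when $g\in H_s$. Its orbit is therefore $\{\one_{H_s},\one_{H_s t}\}$ with $t\notin H_s$, which is $P_s$ with stabilizer $H_s$. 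This accounts for $1+1+2+2+2+4+4=16$ configurations, so the decomposition is complete.

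Next I would derive \eqref{eq:lower} from \eqref{eq:stab}. If $G_x=G$, then $T(x)$ is $G$-fixed, hence constant, so $T(\cC)\subseteq\cC$. If $x\in P_s$, then $H_s\le G_{T(x)}$. Among the stabilizers just listed, only $H_s$ and $G$ contain $H_s$, so $T(x)\in P_s\cup\cC$. Since $Y$ is exactly the set of points with nontrivial stabilizer, and stabilizers can only grow under $T$, we get $T(Y)\subseteq Y$.

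For the counts, I would use the standard description of $\End_G(X)$: an equivariant map is determined freely by choosing, for one representative $x$ of each orbit, an image $y$ with $G_x\le G_y$. The number of choices for each orbit representative is:
\begin{itemize}
\item each constant: $2$ choices (the two constants);
\item each $P_s$: $|P_s\cup\cC|=4$ choices;
\item each free orbit: all $16$ points.
\end{itemize}
This gives $2^2\cdot4^3\cdot16^2=2^{2+6+8}=2^{16}$. As a sanity check, $S=G$ gives $2^{2^4}$ local rules, which agrees.

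For $\cU$, I would use the known structure of $\Aut_G(X)$ for a finite $G$-set: it is the product over conjugacy classes of stabilizers $H$ of the wreath product $(N_G(H)/H)\wr \mathrm{Sym}(r_H)$, where $r_H$ is the number of orbits of that type. Alternatively, one can argue directly that a bijective equivariant map permutes orbits with equal stabilizers and, within a matched pair, sends a representative to any point of the target orbit.
\begin{itemize}
\item Type $G$: two orbits, each of size $1$, giving $2!$.
\item Each $H_s$: one orbit of size $2$, giving $2$ per $s$, so $2^3$ in total.
\item Free type: two orbits, which can be matched in $2!$ ways, with $4$ choices of image for each representative, giving $2!\cdot4^2$.
\end{itemize}
The product is $2\cdot8\cdot32=512$.

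The main point requiring care is the bijectivity argument. A bijection cannot send a non-constant point of $P_s$ to $\cC$, because $\cC$ is already filled by the images of $\cC$. By counting, it therefore maps each $P_s$ onto itself and $\cF$ onto $\cF$. Everything else is routine.
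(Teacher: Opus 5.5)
Your proof is correct and follows essentially the same route as the paper: you describe the weight-two configurations as cosets of the subgroups $H_s$, read the inclusions for $T(\cC)$, $T(P_s)$, $T(Y)$ off the stabilizer inequality $G_x\le G_{T(x)}$, and count units orbit type by orbit type. The differences are minor. You obtain $2^{16}$ from the orbit-representative count $2^2\cdot4^3\cdot16^2$, whereas the paper uses your sanity check (each function $X\to\{0,1\}$ is the local rule at $e$ of exactly one cellular automaton). You also show that a unit preserves each $P_s$ by injectivity and counting, whereas the paper notes that the inverse of an equivariant bijection is equivariant, so units preserve stabilizers exactly.
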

\begin{proof}
If $D=\{u,v\}\subseteq G$ has two elements, then $D$ is a coset of $H_{u^{-1}v}$. Hence every weight-two configuration belongs to exactly one $P_s$, and its stabilizer is $H_s$. A singleton and its complement have trivial stabilizer. This proves the decomposition, and~\eqref{eq:lower} follows from~\eqref{eq:stab}.

Every function $X\to\{0,1\}$ is the local map at $e$ of exactly one cellular automaton, giving $2^{16}$ maps. The inverse of an equivariant bijection is equivariant, so a unit preserves stabilizers exactly and hence preserves each $P_s$ setwise. A unit permutes the two constants, acts independently on each $P_s$ in two ways, and permutes the two free orbits, with four equivariant bijections on each. These choices give the stated count.
\end{proof}

\begin{definition}
If $Gx\ne Gy$ and $G_x\leq G_y$, define the \emph{elementary collapse}
\[
\eps_{x,y}(z)=
\begin{cases}
g\cdot y,&z=g\cdot x,\\
z,&z\notin Gx.
\end{cases}
\]
The stabilizer inclusion makes this map well defined. It is equivariant and idempotent.
\end{definition}

\begin{lemma}\label{lem:elementary}
Every elementary collapse of one of the following types belongs to $\langle\cU\cup\cL_G\rangle$:
\[
\{c_i\}\longrightarrow\{c_{1-i}\},\qquad
P_s\longrightarrow\{c_i\},\qquad
O_j\longrightarrow\{c_i\},\qquad
O_j\longrightarrow P_s,
\]
where $i\in\{0,1\}$, $j\in\{1,3\}$, and $s\ne e$.
\end{lemma}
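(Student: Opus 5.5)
The plan is to compute the action of a single full-neighborhood lazy map $\lambda_{G,p}$ orbit by orbit, and then to use units to move collapses between orbits. First I determine exactly what $\lambda_{G,p}$ does. By definition, $\lambda_{G,p}(x)(g)\ne x(g)$ holds exactly when $g\cdot x=p$. Hence $\lambda_{G,p}$ fixes every $x\notin Gp$.

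Now take $x=p$. The cells that get flipped are exactly the $g$ with $g\cdot p=p$, that is, the cells of the stabilizer $G_p$. So
\[
\lambda_{G,p}(p)=p+\one_{G_p}\pmod 2 .
\]
Since $\lambda_{G,p}$ is equivariant, it sends $g\cdot p$ to $g\cdot\lambda_{G,p}(p)$ for every $g\in G$. Whenever $\lambda_{G,p}(p)\notin Gp$, this shows that $\lambda_{G,p}=\eps_{p,\lambda_{G,p}(p)}$. The hypothesis $G_p\le G_{\lambda_{G,p}(p)}$ needed for the collapse to be defined is automatic from~\eqref{eq:stab}.

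Next I choose patterns and read off collapses directly from this formula.
\begin{itemize}
\item Taking $p=c_i$ flips every cell, so $\lambda_{G,c_i}=\eps_{c_i,c_{1-i}}$.
\item Taking $p=\one_{H_s}$ flips exactly the cells of $H_s$, giving $\lambda_{G,p}(p)=c_0$; thus $\lambda_{G,p}=\eps_{\one_{H_s},c_0}$, which is the collapse $P_s\to\{c_0\}$. The pattern $p=\overline{\one_{H_s}}$ similarly gives $P_s\to\{c_1\}$.
\item For free orbits only the cell $e$ is flipped. So $p=\delta_e$ gives $O_1\to\{c_0\}$, and $p=\overline{\delta_e}$ gives $O_3\to\{c_1\}$.
\item For $p=\delta_s$ with $s\ne e$, flipping the cell $e$ gives $\one_{H_s}$, so $\lambda_{G,\delta_s}=\eps_{\delta_s,\one_{H_s}}$, which is the collapse $O_1\to P_s$. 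Likewise $p=\overline{\delta_s}$ yields $\overline{\one_{H_s}}\in P_s$, which is the collapse $O_3\to P_s$.
\end{itemize}

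The only types not yet obtained are $O_1\to\{c_1\}$ and $O_3\to\{c_0\}$. For these I conjugate by a unit. By the count in Lemma~\ref{lem:orbits}, units act independently on the constants, on each $P_s$, and on the pair of free orbits. Hence there is an involutive unit $\tau$ that fixes $c_0$, $c_1$ and every point of each $P_s$, and swaps $O_1$ with $O_3$; for instance $\tau(g\cdot\delta_e)=g\cdot\overline{\delta_e}$ and $\tau(g\cdot\overline{\delta_e})=g\cdot\delta_e$, which is equivariant since both orbits are free. Then
\[
\tau\circ\eps_{\overline{\delta_e},c_1}\circ\tau=\eps_{\delta_e,c_1},
\qquad
\tau\circ\eps_{\delta_e,c_0}\circ\tau=\eps_{\overline{\delta_e},c_0}.
\]
The general identity behind this is $\sigma\eps_{x,y}\sigma^{-1}=\eps_{\sigma x,\sigma y}$ for any unit $\sigma$, which can be checked directly from the definition.

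The collapses depend on the chosen representatives, but this is harmless. Composing the collapses above with units acting within the target orbit, or within the source free orbit, produces every elementary collapse of the listed types. For example, $\eps_{g\cdot\delta_e,\,y}=\eps_{\delta_e,\,g^{-1}\cdot y}$.

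The main point requiring care is the general identification of $\lambda_{G,p}$ with a collapse. This consists of the flip-set computation $\{g:g\cdot p=p\}=G_p$ under the right action $(g\cdot x)(h)=x(hg)$, together with the check that the image $\lambda_{G,p}(p)$ really leaves the orbit $Gp$. Once that is in place, everything else is a case-by-case reading of the list above.
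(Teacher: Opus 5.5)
Your proof is correct and follows the paper's route. You identify $\lambda_{G,p}$ with the collapse $\eps_{p,\,p+\one_{G_p}}$ and then move collapses around by units via $\alpha\eps_{x,y}\alpha^{-1}=\eps_{\alpha(x),\alpha(y)}$. The only difference is that you realize more types directly through complemented patterns, so conjugation is needed only to swap $O_1$ and $O_3$ and to change representatives.

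One wording fix: left-composing with a unit that moves the target orbit is not a collapse, because it also permutes that orbit. For the two collapses $O_j\to P_s$ (for each $j$ and $s$), use conjugation instead, or right-composition with a unit $v$ acting on the free source orbit, which gives $\eps_{x,y}v=\eps_{x,h\cdot y}$ when $v(x)=h\cdot x$.
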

\begin{proof}
For $p\in X$, the map $\lambda_{G,p}$ fixes $X\setminus Gp$ and changes $p$ exactly at the coordinates in $G_p$. Hence the following lazy maps realize one example of each type:
\[
\begin{array}{c|c}
\text{Lazy map}&\text{Elementary collapse}\\ \hline
\lambda_{G,c_0}&\eps_{c_0,c_1}\\
\lambda_{G,\one_{H_s}}&\eps_{\one_{H_s},c_0}\\
\lambda_{G,\delta_e}&\eps_{\delta_e,c_0}\\
\lambda_{G,\delta_s}&\eps_{\delta_s,\one_{H_s}}.
\end{array}
\]
For $\alpha\in\cU$,
\[
\alpha\eps_{x,y}\alpha^{-1}=\eps_{\alpha(x),\alpha(y)}.
\]
A unit can be chosen independently on $\cC$, on each $P_s$, and on $\cF$. The units that preserve every orbit form a subgroup acting transitively on each orbit, and other units interchange $O_1$ and $O_3$ or interchange the two constants. Thus every collapse in the statement is conjugate to one of the displayed maps.
\end{proof}

Say that $T$ \emph{merges the free orbits} if
\begin{equation}\label{eq:merge}
T(O_1)=T(O_3)\in\{O_1,O_3\}.
\end{equation}
This terminology always requires the common image to be free. Put
\[
\cN=\{T\in\End_G(X):T\text{ does not satisfy }\eqref{eq:merge}\}.
\]

\begin{proposition}\label{prop:N}
We have $\cN=\langle\cU\cup\cL_G\rangle$.
\end{proposition}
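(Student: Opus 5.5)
Both inclusions need to be proved. For $\langle\cU\cup\cL_G\rangle\subseteq\cN$, the first step is to check that $\cN$ is a submonoid containing all generators. Call $T$ \emph{free-injective} if $T(O_1)$ and $T(O_3)$ are distinct free orbits. The key observation is that $T$ fails \eqref{eq:merge} exactly when either $T$ is free-injective or some free orbit is sent into $Y$, because an equivariant map sends a free orbit onto an orbit. The complement of $\cN$ consists of maps that send both free orbits onto a single free orbit. I would show that this complement is closed under composition and absorbs enough, as follows. Suppose $ST$ merges. Since $T(Y)\subseteq Y$ by \eqref{eq:lower}, the orbits $T(O_1)$ and $T(O_3)$ must both be free. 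If they are distinct, then $S$ merges; if they are equal, then $T$ merges. So if neither $S$ nor $T$ merges, neither does $ST$, and $\cN$ is closed under composition. The identity and the units are free-injective. For the lazy maps, $\lambda_{G,p}$ fixes $X\setminus Gp$, so it fixes at least one free orbit pointwise. If $p\notin\cF$ it fixes both. If $p\in O_j$, it sends $O_j$ into $P_s\cup\cC\subseteq Y$ because it changes $p$ at coordinates in $G_p=\{e\}$: one bit flip of a weight-$1$ or weight-$3$ configuration gives weight $0,2,$ or $4$. Hence no lazy map merges.

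For $\cN\subseteq\langle\cU\cup\cL_G\rangle$, I would factor an arbitrary $T\in\cN$ as a unit times a product of the elementary collapses from Lemma~\ref{lem:elementary}, together with collapses within $Y$ of the form $P_s\to P_s$ and $\{c_i\}\to\{c_i\}$. Some of these are not elementary collapses, so they must be handled as equivariant reorientations, that is, unit factors or idempotents. The plan is to process orbits so that the orbits already sent to their final place are never disturbed. First apply a unit $\alpha$ so that $\alpha^{-1}T$ fixes $c_0$ if possible, preserves each $P_s$ that $T$ maps into itself (a unit can act on each $P_s$ independently), and, when $T$ is free-injective, agrees with the identity on $\cF$. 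After this normalization, each orbit $Q$ is either fixed pointwise or sent into a different orbit $R$ with $G_Q\le G_R$. Equivalently, $T$ is a retraction-like map: a composition of collapses $\eps_{x,T(x)}$ applied in a suitable order. The constants must be collapsed last, and free orbits mapped into $P_s$ must be collapsed before any collapse of $P_s$ itself. When a target orbit is itself collapsed elsewhere, I route through the final image directly using $\eps_{x,T(x)}$, where $T(x)$ is the final value. This is available because the final value lies in $Y$, so its type is on the list: $O_j\to P_s$, $O_j\to c_i$, $P_s\to c_i$, $c_i\to c_{1-i}$.

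The main obstacle is the non-free-injective case, where one free orbit, say $O_1$, goes into $Y$ while $O_3$ goes to a free orbit, possibly $O_1$. Here the unit normalization has to arrange $O_3$ to land in the correct free orbit. This is compatible with the collapses only if $O_1$'s own image is realized before $O_3$ is moved onto $O_1$. So I would write $T=\beta\circ\eps\circ\cdots$, where first $O_1$ is collapsed to its target in $Y$ (adjusted by $\beta^{-1}$), then a unit $\beta$ swaps $O_1$ and $O_3$ and adjusts $Y$. A swap of the constants also needs care. If $T$ swaps $c_0$ and $c_1$, this is a unit factor. If $T$ maps both constants to the same constant, use $\eps_{c_i,c_{1-i}}$ last. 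The remaining checks are that each ordering respects the stabilizer constraints of \eqref{eq:stab} and that all factors lie in the generated monoid. These are routine once the ordering is fixed.
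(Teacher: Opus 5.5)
Your proposal is correct and is essentially the paper's proof. The submonoid and lazy-map arguments are the same; the fact needed there is $S(Y)\subseteq Y$, not $T(Y)\subseteq Y$. The reverse inclusion is the same unit normalization followed by the elementary collapses of Lemma~\ref{lem:elementary}, with free orbits sent directly to their final values. The only differences are that you put the unit on the left, where the paper writes $T=T'\alpha$ with $\alpha|_Y=\id_Y$, and that your unit also absorbs the permutations on $Y$.

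One caution on ordering: your phrases ``constants collapsed last'' and ``free orbits collapsed before $P_s$'' are right only as positions in the written product read from left to right. In order of application, the constants must be handled first, then the orbits $P_s$, and the free-orbit collapses last, exactly as in $T'=\bigl(\prod_Q E_Q\bigr)R$.
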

\begin{proof}
For $\alpha,\beta\in\End_G(X)$, suppose that $\beta\alpha$ merges the free orbits. For $x\in\cF$, applying~\eqref{eq:stab} to $\beta$ gives $G_{\alpha(x)}\leq G_{\beta\alpha(x)}=\{e\}$, so $\alpha(O_1)$ and $\alpha(O_3)$ are free. Either $\alpha$ merges them, or $\alpha$ permutes $\cF$ and $\beta$ merges them. Hence $\cN$ is a submonoid. Units belong to $\cN$. A full-neighborhood lazy map changes only one orbit: a changed free configuration has even weight, and a changed nonfree configuration remains nonfree. Thus $\cL_G\subseteq\cN$.

For the reverse inclusion, let $T\in\cN$. Its restrictions to the free orbits whose images are free form an injective equivariant partial permutation of $\cF$. Extend it to a unit $\alpha$ of $X$ fixing $Y$ pointwise, and put $T'=T\alpha^{-1}$. Every free orbit that $T'$ maps to a free orbit is fixed pointwise. All other free orbits map into $Y$.

The map $R$ agreeing with $T'$ on $Y$ and with the identity on $\cF$ is generated by units and the collapses in Lemma~\ref{lem:elementary}. Explicitly, first realize $T'|_{\cC}$ by a permutation or a collapse of the two constants. Then, by successive left compositions, realize on each $P_s$ either its prescribed permutation or its collapse to the prescribed constant. Each of these later maps fixes $X\setminus P_s$, so it does not move the constants already placed.

For every remaining free orbit $Q$, choose $x_Q\in Q$ and let $E_Q=\eps_{x_Q,T'(x_Q)}$. Its target lies in $Y$, so Lemma~\ref{lem:elementary} applies. The $E_Q$ commute, fix $Y$, and satisfy
\[
T'=\Bigl(\prod_Q E_Q\Bigr)R.
\]
Therefore $T=T'\alpha\in\langle\cU\cup\cL_G\rangle$.
\end{proof}

\section{The membership criterion}\label{sec:criterion}

For $s\ne e$, define
\begin{equation}\label{eq:B}
B_s=\lambda_{H_s,1^{H_s}},\qquad B_s(x)(g)=x(g)(1-x(gs)),
\end{equation}
where $1^{H_s}:H_s\to\{0,1\}$ is the constant pattern with value one. Let $J(x)=\bar x$; then $J\in\cU$ and $J^2=\id$.

\begin{lemma}\label{lem:B}
The map $B_s$ fixes $O_1$ pointwise, maps $O_3$ bijectively onto $O_1$, sends $\cC\cup P_s$ to $c_0$, and fixes $P_t$ pointwise for $t\ne s$. The lazy maps that merge the free orbits are exactly
\[
\{B_s,JB_sJ:s\in G\setminus\{e\}\}.
\]
\end{lemma}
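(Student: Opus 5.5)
The plan has two parts. First we verify the behaviour of $B_s$ from the explicit formula $B_s(x)(g)=x(g)(1-x(gs))$. Then we classify the lazy maps that merge the free orbits by reducing, via Proposition~\ref{prop:N}, to lazy maps with small active neighborhoods.

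\textbf{Behaviour of $B_s$.} For each orbit we compute directly.
\begin{itemize}
\item On $\delta_h$, every cell $g$ with $x(g)=1$ has $x(gs)=0$, so $\delta_h$ is fixed and $O_1$ is fixed pointwise.
\item On $\bar\delta_h$, the unique cell $g\ne h$ with $x(g)=1$ and $x(gs)=0$ is $g=hs$. Hence $\bar\delta_h\mapsto\delta_{hs}$, a bijection $O_3\to O_1$.
\item Both constants go to $c_0$.
\item On $P_s$, the support of $\one_{gH_s}$ is $s$-invariant, so every cell is killed and the image is $c_0$.
\item For $t\ne s$, the support $gH_t$ satisfies $gH_t\cap gH_ts=\emptyset$. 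So no cell is changed, and $P_t$ is fixed pointwise.
\end{itemize}
In particular $B_s(O_1)=B_s(O_3)=O_1$, so $B_s$ merges the free orbits. Since $J$ swaps $O_1$ and $O_3$, the conjugate $JB_sJ$ merges them onto $O_3$. Moreover $JB_sJ=\lambda_{H_s,0^{H_s}}$, because conjugating a lazy map by $J$ complements its active pattern.

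\textbf{Which lazy maps merge.} Let $\lambda=\lambda_{S,p}$ with $e\in S$. By Proposition~\ref{prop:N}, no full-neighborhood lazy map merges, so $|S|\in\{1,2,3\}$. The case analysis runs over $|S|$ and $p$. Up to conjugation by $J$ we may fix $p(e)=1$, since $J\lambda_{S,p}J=\lambda_{S,\bar p}$. The case $p(e)=0$ then gives the $J$-conjugates.

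\begin{itemize}
\item $|S|=1$. If $p(e)=1$, then $\lambda$ is the constant map $c_0$. Otherwise it is the constant map $c_1$. Neither sends free orbits to free orbits.
\item $|S|=2$, $S=H_s$. The patterns with $p(e)=1$ are $p=(1,1)$, which gives $B_s$, and $p=(1,0)$, meaning $p(e)=1,\ p(s)=0$. In the second case, $\lambda$ clears every cell $g$ with $x(g)=1$, $x(gs)=0$. Thus $\delta_h\mapsto c_0$, which is not free, so this map does not merge.
\item $|S|=3$. Here $S=\{e,u,v\}$ with $G\setminus S=\{uv\}$, and there are four patterns with $p(e)=1$. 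We look at the images of $\delta_h$ and $\bar\delta_h$. The map must send both orbits into one free orbit, which requires each image to have weight $1$ or weight $3$, both images of the same weight.
\begin{itemize}
\item If $p=(1,0,0)$ then $\delta_h\mapsto c_0$.
\item If $p=(1,1,1)$, the change happens at cells $g$ where $x$ is $1$ on $g,gu,gv$. On $\bar\delta_h$ this holds only at $g=h\,uv$, giving weight $2$.
\item For $p=(1,1,0)$ and $p=(1,0,1)$, on $\delta_h$ no cell sees its neighbourhood pattern, since at least two ones would be needed. Hence $O_1$ is fixed. On $\bar\delta_h$ the number of matching cells must be checked. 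For $p(e)=1,p(u)=1,p(v)=0$, matching requires $gv=h$ and $g\ne h$, $gu\ne h$. The cell $g=hv$ works, so one cell changes and the image has weight $2$.
\end{itemize}
None of these merge.
\end{itemize}

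The main obstacle is exactly this bookkeeping for $|S|=3$. In every such case we must show that some free configuration is sent to a nonfree one, or that the two images have different weights. The cleanest way to organise it is to note that a lazy map changes only cells with prescribed values, so it changes weights by a fixed sign. A map with $p(e)=1$ only decreases weight, so merging with $p(e)=1$ forces $O_3\to O_1$ and $O_1$ fixed. That means exactly two cells of each $\bar\delta_h$ must change, and no cell of $\delta_h$. This leaves only a quick count of matching cells per pattern, which by $J$-symmetry also covers $p(e)=0$.
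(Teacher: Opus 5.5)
Your proof is correct, but your classification of the merging lazy maps takes a different route from the paper's.

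\textbf{Your route.} You split by $|S|$, dispose of $S=G$ via Proposition~\ref{prop:N} (legitimate, since it precedes this lemma), normalize $p(e)=1$ by $J$-conjugation, and check the remaining patterns one at a time on $\delta_h$ and $\bar\delta_h$.

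\textbf{The paper's route.} It splits instead by whether $p$ is constant.
\begin{itemize}
\item A nonconstant pattern occurs at most once in any odd-weight configuration. Choose $h\in S$ with $p(h)=1$ (resp.\ $p(h)=0$); then an occurrence at $g$ in $\delta_u$ (resp.\ $\bar\delta_u$) forces $hg=u$. Such a lazy map therefore either fixes a free configuration or flips one bit and makes its weight even. So it never merges, for every $S$ at once.
\item For constant $p$, normalized to $0^S$, the orbit $O_3$ is fixed and $\delta_u$ has exactly $4-|S|$ active cells. Hence $w(\lambda(\delta_u))=5-|S|$, and a merger forces $|S|=2$.
\end{itemize}

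\textbf{Comparison.} The paper's argument is uniform in $S$, needs no appeal to Proposition~\ref{prop:N}, and isolates the parity obstruction as the reason only constant two-cell patterns work. Yours is more hands-on and equally rigorous, but it relies on $G$ being small to keep the enumeration short. Your closing monotonicity remark fits naturally with the paper's approach: for $p(e)=1$ the weight cannot increase, so $O_1$ must be fixed and each $\bar\delta_h$ must lose exactly two ones. Combined with the ``at most one occurrence'' observation, this eliminates all nonconstant patterns in one step.
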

\begin{proof}
Formula~\eqref{eq:B} fixes every singleton configuration. For a configuration with a unique zero at $u$, it leaves a unique one at $us$. It vanishes on configurations fixed by $s$. If $x\in P_t$ with $t\ne s$, then $x(gs)=1-x(g)$, so $B_s(x)=x$. These observations prove the first assertion.

Let $\lambda=\lambda_{S,p}$ merge the free orbits. If $p$ is nonconstant, it occurs at most once in any odd-weight configuration. Indeed, fix $h\in S$ with $p(h)=1$; an occurrence at $g$ in $\delta_u$ requires $\delta_u(hg)=1$, that is, $hg=u$, which determines $g$. For a weight-three configuration with its unique zero at $u$, argue in the same way with $h\in S$ such that $p(h)=0$. Therefore $\lambda$ either fixes such a configuration or changes one coordinate. An odd-weight image must then be unchanged, which precludes a merger.

Suppose $p$ is constant. Conjugating by $J$, we may assume $p=0^S$. A weight-three configuration remains of odd weight only if its unique zero does not change; thus $\lambda$ fixes $O_3$. If $x$ has a unique one at $u$, the active cells are precisely $G\setminus S^{-1}u$, so
\[
w(\lambda(x))=1+4-|S|.
\]
For $O_1$ to map onto $O_3$, we must have $|S|=2$. Hence $S=\{e,s\}$ and $\lambda=JB_sJ$. Conjugating back gives the stated list.
\end{proof}

Let $\cK$ be the set of $T\in\End_G(X)$ satisfying
\begin{equation}\label{eq:criterion}
\text{if $T$ merges the free orbits, then }
T(\cC\cup P_s)=\{c_i\}\text{ for some }s\ne e,\ i\in\{0,1\}.
\end{equation}

\begin{lemma}\label{lem:closed}
The set $\cK$ is a submonoid containing $\cU\cup\cL$.
\end{lemma}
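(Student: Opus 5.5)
The plan is to check the two assertions of the statement separately: first that $\cU\cup\cL\subseteq\cK$, then that $\cK$ is closed under composition. In both parts the key facts are the invariance relations~\eqref{eq:lower} of Lemma~\ref{lem:orbits} and the factor analysis from the proof of Proposition~\ref{prop:N}.

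\emph{Containment.} A unit $\alpha\in\cU$ is a bijection. Hence $\alpha(O_1)\ne\alpha(O_3)$, so $\alpha$ does not merge the free orbits and satisfies~\eqref{eq:criterion} vacuously.

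Now let $\lambda\in\cL$. If $\lambda$ does not merge the free orbits, then~\eqref{eq:criterion} again holds vacuously. If it does merge them, Lemma~\ref{lem:B} says $\lambda=B_s$ or $\lambda=JB_sJ$ for some $s\ne e$.
\begin{itemize}
\item[] By Lemma~\ref{lem:B}, $B_s(\cC\cup P_s)=\{c_0\}$.
\item[] Since $J$ interchanges $c_0$ and $c_1$ and preserves $\cC\cup P_s$ setwise, $JB_sJ(\cC\cup P_s)=\{c_1\}$.
\end{itemize}
So every merging lazy map satisfies~\eqref{eq:criterion}. The identity lies in $\cK$ because it does not merge.

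\emph{Closure.} Let $\alpha,\beta\in\cK$ and suppose $\beta\alpha$ merges the free orbits. The first paragraph of the proof of Proposition~\ref{prop:N} gives two possibilities.
\begin{itemize}
\item[] Case 1: $\alpha$ merges the free orbits.
\item[] Case 2: $\alpha$ maps $O_1$ and $O_3$ onto distinct free orbits, so it permutes $\cF$, and $\beta$ merges the free orbits.
\end{itemize}

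In Case 1, since $\alpha\in\cK$ there are $s\ne e$ and $i$ with $\alpha(\cC\cup P_s)=\{c_i\}$. Then $\beta\alpha(\cC\cup P_s)=\{\beta(c_i)\}$. By~\eqref{eq:lower}, $\beta(c_i)\in\cC$, so this set is a single constant, as required.

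In Case 2, since $\beta\in\cK$ there are $t\ne e$ and $j$ with $\beta(\cC\cup P_t)=\{c_j\}$. By~\eqref{eq:lower}, $\alpha(\cC)\subseteq\cC$ and $\alpha(P_t)\subseteq P_t\cup\cC$, so $\alpha(\cC\cup P_t)\subseteq\cC\cup P_t$. Hence $\beta\alpha(\cC\cup P_t)\subseteq\{c_j\}$. The left side is nonempty, so $\beta\alpha(\cC\cup P_t)=\{c_j\}$.

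In both cases $\beta\alpha\in\cK$, so $\cK$ is a submonoid. The only point needing care is the case split, and it is already supplied by the stabilizer argument in Proposition~\ref{prop:N}. Case 2 then goes through simply by reusing the same index $t$ for the composite.
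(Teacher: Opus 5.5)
Your proposal is correct and follows the paper's proof essentially step for step. Units and lazy maps are handled via bijectivity and Lemma~\ref{lem:B}, and closure uses the same stabilizer-based case split ($\alpha$ merges the free orbits, or $\alpha$ permutes $\cF$ and $\beta$ merges them), with~\eqref{eq:lower} giving $\alpha(\cC\cup P_s)\subseteq\cC\cup P_s$ in the second case; your explicit checks that $JB_sJ$ sends $\cC\cup P_s$ to $c_1$, that the identity lies in $\cK$, and that $\beta(c_i)$ is a constant only spell out details the paper leaves implicit.
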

\begin{proof}
Units satisfy~\eqref{eq:criterion}, and Lemma~\ref{lem:B} proves it for lazy maps. Let $\alpha,\beta\in\cK$ and suppose $\beta\alpha$ merges the free orbits. Then $\alpha$ maps each free orbit to a free orbit by~\eqref{eq:stab}.

If $\alpha$ merges them, choose $s,i$ with $\alpha(\cC\cup P_s)=\{c_i\}$. The composite maps this set to the constant $\beta(c_i)$. Otherwise, $\alpha$ permutes $\cF$, so $\beta$ merges the free orbits. Choose $s,i$ with $\beta(\cC\cup P_s)=\{c_i\}$. By~\eqref{eq:lower}, $\alpha(\cC\cup P_s)\subseteq\cC\cup P_s$, and hence $\beta\alpha(\cC\cup P_s)=\{c_i\}$. Thus $\beta\alpha\in\cK$.
\end{proof}

\begin{proposition}\label{prop:factor}
Let $T\in\cK$ merge the free orbits, and let $s\ne e$ and $i\in\{0,1\}$ satisfy $T(\cC\cup P_s)=\{c_i\}$. Then
\[
T=R B_s \alpha
\]
for some $R\in\cN$ and $\alpha\in\cU$ such that $\alpha|_Y=\id_Y$ and $R|_Y=T|_Y$.
\end{proposition}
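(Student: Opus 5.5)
The plan is to build $\alpha$ and $R$ explicitly, orbit by orbit. By~\eqref{eq:merge} there is $k\in\{1,3\}$ with $T(O_1)=T(O_3)=O_k$. We will use Lemma~\ref{lem:B}: $B_s$ fixes $O_1$, maps $O_3$ bijectively onto $O_1$, sends $\cC\cup P_s$ to $c_0$, and fixes $P_t$ pointwise for $t\ne s$. Every equivariant map between free orbits is a bijection. Hence $T|_{O_1}:O_1\to O_k$ and $T|_{O_3}:O_3\to O_k$ are equivariant bijections. Consequently
\[
\varphi=(T|_{O_1})^{-1}\circ T|_{O_3}:O_3\to O_1
\]
is an equivariant bijection.

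\emph{Construction of $\alpha$.} Let $\alpha$ be the identity on $Y\cup O_1$, and on $O_3$ put $\alpha|_{O_3}=(B_s|_{O_3})^{-1}\circ\varphi$. This is an equivariant bijection $O_3\to O_3$, so $\alpha\in\cU$ and $\alpha|_Y=\id_Y$. It is designed so that $B_s\alpha|_{O_3}=\varphi$, while $B_s\alpha|_{O_1}=\id_{O_1}$.

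\emph{Construction of $R$.} Define $R$ orbit by orbit:
\[
R|_Y=T|_Y,\qquad R|_{O_1}=T|_{O_1},\qquad R(O_3)=\{c_0\}.
\]
Each piece is equivariant: a constant has full stabilizer, so collapsing $O_3$ to $c_0$ is allowed. Hence $R\in\End_G(X)$. Moreover $R(O_3)$ is not free, so $R$ does not merge the free orbits, and therefore $R\in\cN$.

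\emph{Verification of $T=RB_s\alpha$.}
\begin{itemize}
\item On $O_1$: $RB_s\alpha=R|_{O_1}=T|_{O_1}$.
\item On $O_3$: $RB_s\alpha=T|_{O_1}\circ\varphi=T|_{O_3}$.
\item On $\cC\cup P_s$: $\alpha$ is the identity and $B_s$ sends everything to $c_0$. Then $R(c_0)=T(c_0)=c_i$, which agrees with $T$ on all of $\cC\cup P_s$ by hypothesis.
\item On $P_t$ with $t\ne s$: both $\alpha$ and $B_s$ fix $P_t$ pointwise, so $RB_s\alpha=R=T$ there.
\end{itemize}
This covers all of $X$.

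The only delicate point is choosing $R$ on $O_3$ so that $R\in\cN$. Extending $R$ naively, say by the identity on $O_3$, could reproduce the merger when $k=3$. Collapsing $O_3$ to a constant avoids this, and it is harmless because $B_s\alpha$ never lands in $O_3$.
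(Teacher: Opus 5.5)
Your proof is correct and is essentially the paper's construction. Taking $r=T|_{O_1}$ in the paper's proof yields exactly your $\alpha$, and in both proofs $R$ agrees with $T$ on $Y$ and with $r$ on $O_1$. The only difference is on $O_3$: the paper extends $r$ to an equivariant permutation of $\cF$, while you collapse $O_3$ to $c_0$. Both choices put $R$ in $\cN$, and neither affects the product, since $B_s\alpha$ never takes values in $O_3$.
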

\begin{proof}
Put $Q=T(O_1)=T(O_3)$. Choose an equivariant bijection $r:O_1\to Q$, and extend it to an equivariant permutation of $\cF$. Define $R$ by this permutation on $\cF$ and by $T$ on $Y$. Then $R\in\cN$.

For $j\in\{1,3\}$, the restriction $b_j=B_s|_{O_j}:O_j\to O_1$ is an equivariant bijection. Define
\[
\alpha|_{O_j}=b_j^{-1}\circ r^{-1}\circ T|_{O_j},\qquad \alpha|_Y=\id_Y.
\]
These restrictions are equivariant bijections, so $\alpha\in\cU$, and $RB_s\alpha=T$ on $\cF$. On $\cC\cup P_s$, both maps have value $c_i$, since $R(c_0)=T(c_0)=c_i$. On each $P_t$ with $t\ne s$, both $\alpha$ and $B_s$ are the identity and $R=T$. The factorization follows.
\end{proof}

\begin{proof}[Proof of the criterion in Theorem~\ref{thm:main}]
Lemma~\ref{lem:closed} gives $\cM\subseteq\cK$. Conversely, if $T\in\cK$ does not merge the free orbits, Proposition~\ref{prop:N} gives $T\in\cM$. Otherwise, apply Proposition~\ref{prop:factor}; its factors belong to $\cM$ by Proposition~\ref{prop:N} and~\eqref{eq:B}. Hence $\cM=\cK$.
\end{proof}

\begin{corollary}\label{cor:generators}
The following generating set and factorization description hold:
\[
\cM=\langle\cU\cup\cL_G\cup\{B_a,B_b,B_{ab}\}\rangle,
\qquad
\cM=\cN\ \cup\!\!\bigcup_{s\ne e}\cN B_s\cU.
\]
\end{corollary}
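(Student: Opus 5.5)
Both identities follow by combining the criterion $\cM=\cK$ with Proposition~\ref{prop:N} and Proposition~\ref{prop:factor}; we prove the factorization description first and read off the generating set from it.

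\textbf{The factorization description.} For $\cM\supseteq\cN\cup\bigcup_{s\ne e}\cN B_s\cU$: Proposition~\ref{prop:N} gives $\cN=\langle\cU\cup\cL_G\rangle\subseteq\cM$, and $B_s=\lambda_{H_s,1^{H_s}}\in\cL\subseteq\cM$, so each product $\cN B_s\cU$ lies in the monoid $\cM$. For the reverse inclusion, take $T\in\cM=\cK$. If $T$ does not merge the free orbits, then $T\in\cN$. Otherwise $T$ merges them, and~\eqref{eq:criterion} supplies $s\ne e$ and $i$ with $T(\cC\cup P_s)=\{c_i\}$. Proposition~\ref{prop:factor} then gives $T=RB_s\alpha$ with $R\in\cN$ and $\alpha\in\cU$, so $T\in\cN B_s\cU$.

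\textbf{The generating set.} Put $\cM'=\langle\cU\cup\cL_G\cup\{B_a,B_b,B_{ab}\}\rangle$. We have $\cM'\subseteq\cM$, because $\cL_G\subseteq\cL$ and each $B_s\in\cL$. Conversely, $\cN=\langle\cU\cup\cL_G\rangle\subseteq\cM'$ by Proposition~\ref{prop:N}. Also $\cU\subseteq\cM'$ and $B_s\in\cM'$, so every product $\cN B_s\cU$ lies in $\cM'$. The factorization description then yields $\cM\subseteq\cM'$.

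No step presents a real obstacle. The only point to watch is that Proposition~\ref{prop:factor} needs exactly the hypothesis $T(\cC\cup P_s)=\{c_i\}$. This is precisely what membership in $\cK$ provides in the merging case, and $\cK=\cM$ is the criterion already proved.
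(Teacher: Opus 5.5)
Your proposal is correct and follows essentially the paper's argument. Like the paper, you get the easy inclusions from the displayed factors lying in $\cM$ (the fact $\cU\subseteq\cM$, needed for $\langle\cU\cup\cL_G\cup\{B_a,B_b,B_{ab}\}\rangle\subseteq\cM$, is immediate from the definition), and you get the reverse inclusions from the criterion $\cM=\cK$ together with Propositions~\ref{prop:N} and~\ref{prop:factor}.
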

\begin{proof}
Every displayed factor belongs to $\cM$. The reverse inclusions follow from Propositions~\ref{prop:N} and~\ref{prop:factor}.
\end{proof}

\section{Counting and the weaker invariant}\label{sec:count}

An equivariant map on a free orbit is determined by the image of one representative, which may be any element of $X$. An equivariant map on $P_s$ is determined by the image of one representative, which may be either element of $P_s$ or either constant. These choices are independent on distinct domain orbits.

\begin{proposition}\label{prop:count}
There are $5\,824$ maps outside $\cM$, and
\[
|\cN|=57\,344,\qquad |\cM\setminus\cN|=2\,368,\qquad |\cM|=59\,712.
\]
\end{proposition}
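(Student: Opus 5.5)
The count follows from the orbit-wise description just given. We count all of $\End_G(X)$, then the maps that merge the free orbits, and then those merging maps that satisfy~\eqref{eq:criterion}. By the criterion in Theorem~\ref{thm:main}, already proved as $\cM=\cK$, the merging maps satisfying~\eqref{eq:criterion} are exactly $\cM\setminus\cN$.

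First, a map in $\End_G(X)$ is a product of independent choices on the domain orbits, by Lemma~\ref{lem:orbits} and the remark preceding the proposition.
\begin{itemize}
\item Each constant $c_j$ may go to either constant, giving $2$ choices. Only constants are available, since $G_{c_j}=G$ forces $T(c_j)\in\cC$.
\item Each $P_s$ has $4$ choices.
\item Each free orbit has $16$ choices.
\end{itemize}
This recovers $4\cdot 4^3\cdot 16^2=2^{16}$.

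A map merges the free orbits, in the sense of~\eqref{eq:merge}, exactly when the following holds. One chooses a free target orbit $Q\in\{O_1,O_3\}$, giving $2$ ways. Then both free domain orbits map onto $Q$; the representative of each has $4$ possible images, so $4\cdot 4$ ways. This gives $32$ free-part choices. The choices on $Y$ are unrestricted, giving $4\cdot 64=256$. Hence there are $8\,192$ merging maps, and
\[
|\cN|=65\,536-8\,192=57\,344.
\]

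Next, count the merging maps satisfying~\eqref{eq:criterion}. The free part still contributes $32$. On $Y$ we need some $i$ with $T(c_0)=T(c_1)=c_i$ and $T(P_s)=\{c_i\}$ for at least one $s$.

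For fixed $i$, the constants are determined, giving $1$ way. For the three $P_s$, each has $4$ choices, of which $3$ avoid $c_i$. By complementary counting there are $4^3-3^3=37$ admissible choices.

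The two values of $i$ give disjoint sets, because they differ on $c_0$. Hence the $Y$-part contributes $2\cdot 37=74$. This gives
\[
|\cM\setminus\cN|=74\cdot 32=2\,368.
\]
It follows that
\[
|\cM|=57\,344+2\,368=59\,712,
\]
and the number of maps outside $\cM$ is $8\,192-2\,368=5\,824$.

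The only delicate step is the count of the $Y$-parts, for two reasons. First, the conditions $T(c_0)=T(c_1)$ and the existence of a suitable $s$ must be counted together, without double counting over $s$; inclusion–exclusion via the complement $3^3$ handles this. Second, one must confirm that the two values of $i$ are disjoint cases.
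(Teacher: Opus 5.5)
Your proof is correct and follows essentially the paper's route: independent orbit-wise choices, the $32$ ways to merge the free orbits, and the criterion $\cM=\cK$ together with $\cN\subseteq\cM$. The only difference is bookkeeping. You count the admissible merging maps directly as $32\cdot 2\cdot(4^3-3^3)=2\,368$. The paper instead counts the non-members as $4\,096+1\,728=5\,824$ and obtains $|\cM\setminus\cN|$ by subtraction; the two counts are complementary within the $8\,192$ merging maps.
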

\begin{proof}
There are $2\cdot4^2=32$ ways to map both free orbits onto the same free orbit: two choices for the target orbit and four equivariant bijections from each source orbit. By Theorem~\ref{thm:main}, such a map fails membership exactly in either of the following disjoint cases.

If the constants have distinct images, there are two choices for their restriction and four choices on each $P_s$. This gives $32\cdot2\cdot4^3=4\,096$ maps.

If both constants have image $c_i$, but no $P_s$ maps to $c_i$, there are two choices for $i$ and three choices on each $P_s$: its two automorphisms or its collapse to $c_{1-i}$. This gives $32\cdot2\cdot3^3=1\,728$ maps. Therefore
\[
|\cM|=65\,536-4\,096-1\,728=59\,712.
\]
There are $32\cdot4\cdot4^3=8\,192$ maps that merge the free orbits. Subtracting this from $65\,536$ gives $|\cN|=57\,344$, and the remaining equality follows.
\end{proof}

A weaker condition is the constant-separation invariant
\begin{equation}\label{eq:old}
T(c_0)\ne T(c_1)\ \Longrightarrow\ T|_{\cF\cap T^{-1}(\cF)}\text{ is injective}.
\end{equation}
An equivariant map between free orbits is bijective, so noninjectivity over a free image occurs exactly when the two free orbits merge. Thus~\eqref{eq:old} excludes the first family in Proposition~\ref{prop:count}, but excludes no map whose constants have the same image. It holds for $65\,536-4\,096=61\,440$ maps, of which exactly $1\,728$ lie outside $\cM$.

\begin{example}\label{ex:missed}
Define
\[
T(x)=\begin{cases}
\bar x,&w(x)=3,\\
c_0,&x=c_1,\\
x,&\text{otherwise}.
\end{cases}
\]
This map is equivariant, sends both constants to $c_0$, fixes every $P_s$, and maps both free orbits onto $O_1$. Hence it satisfies~\eqref{eq:old}, but $T\notin\cM$ by Theorem~\ref{thm:main}.
\end{example}

\section{Relative rank and maximality}\label{sec:maximal}

Put $\cE=\End_G(X)$ and $\operatorname{rk}(T)=|T(X)|$ for $T\in\cE$. The \emph{relative rank} of $\cM$ in $\cE$ is the least cardinality of a set $B\subseteq\cE$ such that $\cE=\langle\cM\cup B\rangle$. Define
\[
\eps(x)=
\begin{cases}
\bar x,&x\in O_3,\\
x,&x\notin O_3.
\end{cases}
\]
This is the elementary collapse $\eps_{\bar\delta_e,\delta_e}$. In particular, $\eps^2=\eps$ and $\operatorname{rk}(\eps)=12$.

\begin{proposition}\label{prop:relative-rank}
The relative rank of $\cM$ in $\cE$ is one. More precisely,
\[
\cE=\cN\cup\cN\eps\cU
=\langle\cM\cup\{\eps\}\rangle.
\]
\end{proposition}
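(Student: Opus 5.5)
The plan is to prove the chain of inclusions
\[
\cE\subseteq\cN\cup\cN\eps\cU\subseteq\langle\cM\cup\{\eps\}\rangle\subseteq\cE .
\]
The last inclusion is trivial. The middle one holds because $\cN\subseteq\cM$ by Proposition~\ref{prop:N} and $\cU\subseteq\cM$. The relative rank is then at most one. It is at least one because $\cM\ne\cE$, which follows from the count $|\cM|=59\,712<65\,536$ in Proposition~\ref{prop:count} or from Example~\ref{ex:missed}. The substance is therefore the first inclusion: every $T\in\cE\setminus\cN$ factors as $R\eps\alpha$ with $R\in\cN$ and $\alpha\in\cU$.

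To get this factorization, I will imitate Proposition~\ref{prop:factor} with $B_s$ replaced by $\eps$. The key features of $\eps$ are three:
\begin{itemize}
\item it fixes $Y$ pointwise;
\item it fixes $O_1$ pointwise;
\item it maps $O_3$ bijectively and equivariantly onto $O_1$ via $x\mapsto\bar x$.
\end{itemize}
The third point holds since complementation commutes with the shift action. Unlike $B_s$, the map $\eps$ imposes no constraint on $\cC\cup P_s$, and this is why no hypothesis on $T|_Y$ is needed.

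Concretely, let $T$ merge the free orbits and put $Q=T(O_1)=T(O_3)$. I choose an equivariant bijection $r:O_1\to Q$ and extend it to an equivariant permutation of $\cF$. I then define $R$ to equal this permutation on $\cF$ and $T$ on $Y$. Since $R$ is injective on $\cF$ with free images, $R$ does not merge the free orbits, so $R\in\cN$.

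Next, for $j\in\{1,3\}$, let $b_j=\eps|_{O_j}:O_j\to O_1$, which is an equivariant bijection in both cases. I set
\[
\alpha|_{O_j}=b_j^{-1}\circ r^{-1}\circ T|_{O_j},\qquad \alpha|_Y=\id_Y .
\]
Each piece is an equivariant bijection from $O_j$ onto $O_j$, so $\alpha\in\cU$.

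It remains to check that $R\eps\alpha=T$. On $\cF$ the definitions give
\[
R\eps\alpha|_{O_j}=r\,b_j\,b_j^{-1}r^{-1}T|_{O_j}=T|_{O_j}.
\]
On $Y$, the maps $\alpha$ and $\eps$ are both the identity and $R=T$, so the composite agrees with $T$ there as well.

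I expect no real obstacle. The only point needing care is that $T|_{O_j}$ lands in $Q$ bijectively. This holds because an equivariant map between free orbits is a bijection, which makes $\alpha$ well defined. Finally, $\eps$ is equivariant and idempotent, as noted for elementary collapses with $G_{\bar\delta_e}=\{e\}$.
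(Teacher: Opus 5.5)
Your proposal is correct and follows essentially the same route as the paper: it uses the same factorization $T=R\eps\alpha$, with $R$ equal to $T$ on $Y$ and to an equivariant permutation on $\cF$. The paper takes $r=T|_{O_1}$ so that $\alpha$ is the identity on $O_1$, and it gets the lower bound from $\eps\notin\cM$ rather than from $\cM\ne\cE$; neither difference matters.
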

\begin{proof}
Let $T\in\cE\setminus\cN$. Then $Q=T(O_1)=T(O_3)$ is free, and the restrictions $t_j=T|_{O_j}:O_j\to Q$ are equivariant bijections. Extend $t_1$ to an equivariant permutation of $\cF$, and define $R$ by this permutation on $\cF$ and by $T$ on $Y$. Thus $R\in\cN$ and $R|_Y=T|_Y$.

Put $\phi=\eps|_{O_3}:O_3\to O_1$, and define $\alpha\in\cU$ by
\[
\alpha|_{Y\cup O_1}=\id_{Y\cup O_1},\qquad
\alpha|_{O_3}=\phi^{-1}\circ t_1^{-1}\circ t_3.
\]
Then $R\eps\alpha=T$: on $O_3$ this follows from $t_1\phi\phi^{-1}t_1^{-1}t_3=t_3$, and on $Y\cup O_1$ it follows from the definitions. Hence $\cE=\cN\cup\cN\eps\cU$. Since $\cN\subseteq\cM$ by Proposition~\ref{prop:N}, adjoining $\eps$ to $\cM$ generates $\cE$. Finally, $\eps\notin\cM$ by Theorem~\ref{thm:main}, since it merges the free orbits and fixes both constants. Thus the relative rank is one.
\end{proof}

\begin{theorem}\label{thm:maximal}
Let
\[
I_{11}=\{T\in\cE:\operatorname{rk}(T)\leq11\},\qquad
\cQ=\cM\cup I_{11}.
\]
Then $\cQ$ is the unique maximal proper submonoid of $\cE$ containing $\cM$. Moreover,
\[
\cM\subsetneq\cQ\subsetneq\cE,\qquad
\cE\setminus\cQ=\cU\eps\cU,\qquad
|\cQ|=65\,024.
\]
\end{theorem}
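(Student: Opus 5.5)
The plan is to identify $\cE\setminus\cQ$ explicitly first; everything else (submonoid property, properness, maximality, uniqueness, cardinality) then follows from Proposition~\ref{prop:relative-rank} and Theorem~\ref{thm:main}.

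\emph{Step 1: rank of merging maps.} By Lemma~\ref{lem:orbits}, $|\cF|=8$ and $|Y|=8$. Let $T$ merge the free orbits. Then $T(\cF)$ is a single free orbit of size $4$, and $T(Y)\subseteq Y$ by~\eqref{eq:lower}. Hence
\[
\operatorname{rk}(T)=4+|T(Y)|\le 12,
\]
with equality iff $T|_Y$ is injective, i.e.\ a permutation of $Y$. In particular $\operatorname{rk}(\eps)=12$.

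\emph{Step 2: $\cE\setminus\cQ=\cU\eps\cU$.} Maps in $\cN$ lie in $\cM$ by Proposition~\ref{prop:N}, so $\cE\setminus\cM$ consists of merging maps that violate~\eqref{eq:criterion}. Removing those of rank at most $11$ leaves the elements of $\cE\setminus\cM$ of rank exactly $12$. By Step~1 these are merging maps with $T|_Y$ bijective.

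Conversely, a merging map with $T|_Y$ bijective sends $c_0,c_1$ to distinct constants. So it violates~\eqref{eq:criterion} automatically, and therefore lies in $\cE\setminus\cM$. Thus
\[
\cE\setminus\cQ=\{T:\ T\text{ merges the free orbits and }T|_Y\text{ is bijective}\}.
\]
It remains to match this set with $\cU\eps\cU$.
\begin{itemize}
\item[$\subseteq$:] Take such a $T$. In the factorization $T=R\eps\alpha$ from the proof of Proposition~\ref{prop:relative-rank}, $R$ is a permutation of $\cF$ together with $T|_Y$. This is bijective, so $R\in\cU$.
\item[$\supseteq$:] Elements of $\cU\eps\cU$ merge the free orbits. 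Their restriction to $Y$ is bijective, because $\eps|_Y=\id_Y$ and units preserve $Y$.
\end{itemize}

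\emph{Step 3: cardinality and properness.} The count repeats Proposition~\ref{prop:count}. There are $32$ choices for a merging action on $\cF$. An equivariant permutation of $Y$ has $2\cdot 2^3=16$ choices. So $|\cU\eps\cU|=512$ and $|\cQ|=65\,536-512=65\,024$.

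The strict inclusions are as follows. We have $\eps\notin\cQ$. Also $\cM\ne\cQ$, since Proposition~\ref{prop:count} gives $5\,824>512$ maps outside $\cM$, so some non-member of $\cM$ has rank at most $11$.

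\emph{Step 4: $\cQ$ is a submonoid.} Since $\operatorname{rk}(ST)\le\min(\operatorname{rk}S,\operatorname{rk}T)$, the set $I_{11}$ is a two-sided ideal of $\cE$. The set $\cM$ is a submonoid. Hence $\cM\cup I_{11}$ is closed under composition and contains $\id$.

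\emph{Step 5: maximality and uniqueness.} Let $\cV$ be any proper submonoid of $\cE$ containing $\cM$. Suppose $\cV$ contained some $u\eps v$ with $u,v\in\cU\subseteq\cM$. Then $\eps=u^{-1}(u\eps v)v^{-1}\in\cV$, so $\cV=\cE$ by Proposition~\ref{prop:relative-rank}, a contradiction. Therefore $\cV\subseteq\cE\setminus\cU\eps\cU=\cQ$. This shows at once that $\cQ$ is maximal and that it is the unique maximal proper submonoid containing $\cM$.

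The main obstacle is Step~2. It requires checking that rank $12$ for a merging map forces injectivity on $Y$, and hence $R\in\cU$. It also requires that such maps automatically violate~\eqref{eq:criterion}, which holds because the constants have distinct images.
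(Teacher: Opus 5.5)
Your proof is correct and follows essentially the same route as the paper. Shared ingredients are the rank bound $\operatorname{rk}(T)=4+|T(Y)|\le 12$ for merging maps, the factorization $T=R\eps\alpha$ from Proposition~\ref{prop:relative-rank} with $R\in\cU$ when $T|_Y$ is bijective, the ideal $I_{11}$, the $32\cdot 16$ count, and the argument $\eps=u^{-1}(u\eps v)v^{-1}$ for maximality and uniqueness. The differences are minor and both valid: you verify $\cU\eps\cU\subseteq\cE\setminus\cQ$ directly (merging plus bijectivity on $Y$) rather than via $\eps\notin\cM$, and you get $\cM\neq\cQ$ from the counts $5\,824>512$ rather than from Example~\ref{ex:missed}.
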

\begin{proof}
Since $\operatorname{rk}(AB)\leq\min\{\operatorname{rk}(A),\operatorname{rk}(B)\}$, the set $I_{11}$ is a two-sided ideal of $\cE$. Thus $\cQ$ is a submonoid.

Every $T\notin\cM$ merges the free orbits. Since $T(Y)\subseteq Y$, we have
\[
\operatorname{rk}(T)=4+|T(Y)|\leq12,
\]
with equality precisely when $T|_Y$ is bijective. In this case the factor $R$ constructed in Proposition~\ref{prop:relative-rank} is a unit, so $T\in\cU\eps\cU$. Conversely, every $u\eps v$ with $u,v\in\cU$ has rank $12$ and lies outside $\cM$; otherwise $\eps=u^{-1}(u\eps v)v^{-1}$ would belong to $\cM$. Therefore $\cE\setminus\cQ=\cU\eps\cU$.

If a submonoid $H$ contains $\cM$ and some $T=u\eps v\notin\cQ$, then $\eps=u^{-1}Tv^{-1}\in H$. Proposition~\ref{prop:relative-rank} gives $H=\cE$. Consequently, every proper submonoid containing $\cM$ is contained in $\cQ$. Since $\eps\notin\cQ$, this proves that $\cQ$ is the unique maximal proper submonoid containing $\cM$.

There are $2\cdot4^2=32$ restrictions merging the free orbits and $2\cdot2^3=16$ equivariant bijections of $Y$. Every combination lies outside $\cQ$: an equivariant bijection of $Y$ permutes $\cC$, so the constants have distinct images and the map lies outside $\cM$ by Theorem~\ref{thm:main}, and its rank is $4+8=12$. Hence
\[
|\cQ|=65\,536-32\cdot16=65\,024.
\]
Finally, the map in Example~\ref{ex:missed} has image $O_1\cup P_a\cup P_b\cup P_{ab}\cup\{c_0\}$, so its rank is $4+6+1=11$, and it lies outside $\cM$. This proves $\cM\subsetneq\cQ$.
\end{proof}

\section{The lazy-only monoid and exact enumeration}\label{sec:computation}

We describe the formal and computational verifications in the supplementary material~\cite{supp}. The structural results above do not use them. Write $\cD=\langle\cL\rangle$. For a finite generating set $\Gamma$, let
\[
\ell_\Gamma(T)=\min\{k\geq0:T=\gamma_k\cdots\gamma_1,\ \gamma_i\in\Gamma\}
\]
for $T\in\langle\Gamma\rangle$; the empty product is $\id$.

\begin{lemma}\label{lem:lazycount}
There are $54$ distinct lazy cellular automata. None is invertible, and no nonempty product of lazy cellular automata is the identity.
\end{lemma}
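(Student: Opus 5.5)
Three things must be shown: there are exactly $54$ distinct lazy maps, none of them is a unit, and no nonempty product of lazy maps is $\id$. The second and third are immediate once we have a monoid invariant that is strictly decreasing. For any $\lambda=\lambda_{S,p}$, the configuration $p'=\one_{\supp p}$ extended by zeros outside $S$ (or any extension of $p$ to $G$) contains $p$ at the cell $e$. So $\lambda(p')\ne p'$, and $\lambda\ne\id$.

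\emph{Not a unit.} I would show directly that $\lambda_{S,p}$ is never injective. Let $x$ be any configuration with $(e\cdot x)|_S=p$, and let $x'$ be the configuration obtained from $x$ by flipping the coordinate at $e$. I expect to pick $x$ so that $\lambda(x)=\lambda(x')$: the central symbol becomes the same, and the other cells are unaffected because the active status of the other cells does not change. This last condition is the delicate part, since cells $g$ with $S g\ni e$ see the flip. A cleaner route is to use the structure from Lemma~\ref{lem:orbits}. A unit preserves every stabilizer exactly, and in particular it permutes $\cC$ and maps each $P_s$ onto itself. So it suffices to exhibit an orbit on which $\lambda$ fails to act injectively, or an orbit it maps to an orbit with a different stabilizer. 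For $S=G$ this is Lemma~\ref{lem:elementary}: $\lambda_{G,p}$ is a nontrivial elementary collapse of $Gp$ onto $G\lambda(p)$, and $\lambda(p)$ differs from $p$ exactly on $G_p$, which changes the weight by $|G_p|$ and hence changes the orbit. For general $S$, I would argue via weight: take $x\in Gp'$ with $p'$ chosen among the extensions of $p$ as the one maximizing or minimizing weight, so that $\lambda$ changes some coordinate but the image cannot be a unit image. I expect this case analysis, which is finite, to be the main obstacle.

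\emph{No product is the identity.} Once non-invertibility is known, the third claim follows from finiteness. If $\gamma_k\cdots\gamma_1=\id$ with $k\ge1$, then $\gamma_1$ is injective, and on the finite set $X$ that makes it a unit, contradicting non-invertibility. The same argument applied to $\gamma_k$ gives surjectivity; either one suffices.

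\emph{The count $54$.} I would enumerate the pairs $(S,p)$ up to equality of maps. Two lazy maps $\lambda_{S,p}$ and $\lambda_{S',p'}$ coincide exactly when the sets of full patterns $\{x\in X : x|_S=p\}$ and $\{x\in X : x|_{S'}=p'\}$ coincide. This is because $\lambda_{S,p}(x)(e)\ne x(e)$ precisely on that set, and the set is a cylinder. Distinct pairs with $e\in S$ give distinct cylinders, because a cylinder determines its coordinates and its pattern. The count is therefore $\sum_{k=0}^{3}\binom{3}{k}2^{k+1}=2\cdot 3^3=54$.
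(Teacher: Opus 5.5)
Your count of $54$ via cylinders is correct and matches the paper. Your reduction of the third claim to non-invertibility is also correct: a product equal to $\id$ forces $\gamma_1$ to be injective, hence a permutation of the finite set $X$.

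\textbf{The gap is the middle claim.} You prove non-invertibility only for $S=G$, via Lemma~\ref{lem:elementary}. For general $S$ you defer to a ``case analysis'' over extremal-weight extensions that you never carry out, and that is exactly the content the lemma requires. Neither of your sketched routes closes it as stated.
\begin{itemize}
\item The flip-at-$e$ collision can fail outright. For $\lambda_{G,c_0}$ with $x=c_0$ and $x'=\delta_e$, one gets $\lambda(c_0)=c_1$ but $\lambda(\delta_e)=\delta_e$, since no cell of $\delta_e$ sees the all-zero pattern.
\item The orbit-by-orbit strategy cannot conclude from a weight change on a single orbit. Units may interchange $c_0$ and $c_1$, and may interchange $O_1$ and $O_3$, so such a change does not exhibit a change of stabilizer.
\end{itemize}

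\textbf{The missing idea is the one-directional behaviour of a lazy map.} This is presumably the ``invariant'' your first sentence alludes to, but you never identify it.
\begin{itemize}
\item If $(g\cdot x)|_S=p$, then $x(g)=(g\cdot x)(e)=p(e)$. So $\lambda_{S,p}$ only ever rewrites a cell holding $p(e)$ into $1-p(e)$, and the set of cells with value $1-p(e)$ can only grow.
\item Take $p(e)=0$; the case $p(e)=1$ is symmetric, or conjugate by $J$. Then $w(\lambda_{S,p}(x))\geq w(x)$ for every $x$.
\item The inequality is strict at any extension of $p$, which you already showed is changed at $e$.
\item A bijection of $X$ preserves $\sum_{x\in X}w(x)$, so equality would be forced at every $x$, a contradiction.
\end{itemize}
The paper phrases the same argument with cycles. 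Every point of a permutation of a finite set lies on a cycle, and a monotonically growing set of cells must be constant along a cycle, so every configuration would be fixed.

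With this step in place, your finiteness argument finishes the lemma. Note that weight is not monotone across products mixing the two writing symbols, so the third claim really does need your injectivity-of-$\gamma_1$ argument, not a monoid-wide invariant.
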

\begin{proof}
The set of configurations changed at $e$ by $\lambda_{S,p}$ is the cylinder
\[
\{x\in X:x|_S=p\}.
\]
This cylinder determines $S$ and $p$: its fixed coordinates are exactly those in $S$. Hence distinct pairs $(S,p)$ give distinct maps, and
\[
|\cL|=\sum_{e\in S\subseteq G}2^{|S|}=2(1+2)^3=54.
\]
For a lazy map, let $i=1-p(e)$ be its writing symbol. Under iteration, the set of coordinates with value $i$ can only increase. If the map were a permutation of the finite set $X$, every configuration would lie on a cycle, forcing this set to be unchanged at each step and every configuration to be fixed. However, any extension of $p$ changes at $e$. Thus each lazy map is noninvertible. A product of transformations of a finite set is invertible only if every factor is invertible, proving the final assertion.
\end{proof}

\begin{proposition}\label{prop:noswaps}
Every $T\in\cD$ satisfies
\begin{equation}\label{eq:noswaps}
T(\cC)=\cC\ \Longrightarrow\ T|_{\cC}=\id_{\cC},\qquad
T(P_s)=P_s\ \Longrightarrow\ T|_{P_s}=\id_{P_s}\quad(s\ne e).
\end{equation}
Exactly $19\,360$ elements of $\cM$ satisfy these conditions.
\end{proposition}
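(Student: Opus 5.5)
The statement has two parts: the no-swap conditions~\eqref{eq:noswaps} hold on all of $\cD$, and a count of the elements of $\cM$ satisfying them. I will prove the first part by showing that the set $\cD'$ of maps satisfying~\eqref{eq:noswaps} is a submonoid containing $\cL$. For the second part I will count directly, using the independence of choices on the domain orbits.

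\emph{Closure.} The identity satisfies~\eqref{eq:noswaps}. Let $\alpha,\beta\in\cD'$ and suppose $\beta\alpha(\cC)=\cC$. By~\eqref{eq:lower} we have $\alpha(\cC)\subseteq\cC$. If $\alpha$ collapsed $\cC$ to one point, then $\beta\alpha(\cC)$ would have one element. Hence $\alpha(\cC)=\cC$, so $\alpha|_\cC=\id$. Then $\beta(\cC)=\cC$, so $\beta|_\cC=\id$ as well. The same argument works for $P_s$. Suppose $\beta\alpha(P_s)=P_s$. By~\eqref{eq:lower}, $\alpha(P_s)\subseteq P_s\cup\cC$. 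If $\alpha(P_s)$ met $\cC$, equivariance would force $\alpha(P_s)$ to be a single constant, and its image under $\beta$ would be a constant, not $P_s$. So $\alpha(P_s)=P_s$, and then $\beta(P_s)=P_s$; both restrictions are the identity.

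\emph{Lazy maps.} Fix $\lambda=\lambda_{S,p}$ and let $i=1-p(e)$ be its writing symbol, as in the proof of Lemma~\ref{lem:lazycount}. Each coordinate either keeps its value or changes to $i$. Suppose $\lambda(\cC)=\cC$ but $\lambda$ swaps the constants. Then $\lambda(c_{1-i})=c_i$, which is allowed, but also $\lambda(c_i)=c_{1-i}$, which would require changing coordinates with value $i$ to $1-i$. That is impossible, so a swap cannot occur. Now suppose $\lambda(P_s)=P_s$ and $\lambda$ acts as the nontrivial automorphism, so $\lambda(x)=\bar x$ for $x\in P_s$. The complement changes the two coordinates of $x$ with value $i$ to $1-i$, again impossible. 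Hence $\lambda|_{P_s}$ is the identity. This step is short once the monotonicity of the writing symbol is used; it is the key observation.

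\emph{Count.} I will count elements of $\cM=\cK$ satisfying~\eqref{eq:noswaps}, with independent choices on each orbit. On $\cC$ there are three allowed behaviours: the identity, or a collapse to $c_0$ or to $c_1$. On each $P_s$ there are three: the identity, or a collapse to $c_0$ or to $c_1$. On $\cF$ there are $16^2=256$ choices in total. Of these, $32$ merge the free orbits, leaving $224$ that do not merge.

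For the non-merging maps, the condition~\eqref{eq:criterion} is vacuous, giving $224\cdot3\cdot3^3=18\,144$ maps.

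For the merging maps we need $T(\cC\cup P_s)=\{c_i\}$ for some $s$ and $i$. This forces $\cC$ to collapse to some $c_i$, with two choices of $i$. Each $P_s$ then has three options, and at least one $P_s$ must collapse to $c_i$. That gives $27-8=19$ configurations on the $P_s$ for each $i$. The merging total is therefore $32\cdot2\cdot19=1\,216$.

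Adding the two cases gives $18\,144+1\,216=19\,360$, as claimed. The main risk is in this bookkeeping, so I will double-check that the orbit choices are independent and that the options on $\cC$ and each $P_s$ are exactly three.
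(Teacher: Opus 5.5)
Your proof is correct and follows essentially the same route as the paper. Both arguments show the no-swap conditions are closed under composition (the paper phrases this via intermediate images along a word $\lambda_k\cdots\lambda_1$), use the one-directional writing symbol of a lazy map to rule out swaps on $\cC$ and on each $P_s$, and count orbit by orbit using $\cM=\cK$; your direct count $224\cdot 3^4+32\cdot 2\cdot 19=19\,360$ is just the complementary form of the paper's subtraction $3^4\cdot 16^2-32(3^3+2\cdot 2^3)$.
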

\begin{proof}
Write $T=\lambda_k\cdots\lambda_1$, with $\lambda_j\in\cL$. The assertion is immediate for $k=0$. If $T(\cC)=\cC$, then $2=|\cC|\geq|\lambda_1(\cC)|\geq\cdots\geq|T(\cC)|=2$, so every intermediate image of $\cC$ has two elements and therefore equals $\cC$. Each $\lambda_j|_{\cC}$ is consequently a permutation. A lazy map fixes the constant with its writing symbol, so this permutation is the identity.

Suppose $T(P_s)=P_s$. By~\eqref{eq:lower}, each intermediate image is $P_s$ or a singleton in $\cC$. Since constants cannot return to $P_s$, every intermediate image equals $P_s$. A lazy map changes bits in only one direction, so $w(\lambda_j(x))=w(x)$ implies $\lambda_j(x)=x$. Both weights are two for $x\in P_s$, proving the second implication.

A map satisfying~\eqref{eq:noswaps} has three choices on $\cC$ (the identity or either constant map), three on each $P_s$ (the identity or either constant map), and $16$ on each free-orbit representative. Among these $3^4\cdot16^2$ maps, Theorem~\ref{thm:main} excludes $32\cdot3^3$ with distinct constant images and $32\cdot2\cdot2^3$ with equal constant images but no two-point witness. Hence the required count is
\[
3^4\cdot16^2-32(3^3+2\cdot2^3)=19\,360.
\]
\end{proof}

\begin{proposition}[Exact computation]\label{prop:computed}
We have
\[
|\cD|=4\,007,\qquad
\max_{T\in\cD}\ell_{\cL}(T)=
\max_{T\in\cM}\ell_{\cU\cup\cL}(T)=6.
\]
The word-length distributions are given in Table~\ref{tab:depth}. The semigroup of nonempty lazy compositions has $4\,006$ elements.
\end{proposition}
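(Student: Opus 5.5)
This is an exact computational statement, so I would prove it by a finite exhaustive enumeration organized as a breadth-first search in the Cayley graph of the generated monoids. I would not attempt a structural derivation. Each $T\in\End_G(X)$ is encoded by its local rule $x\mapsto T(x)(e)$, a 16-bit integer. Composition is computed by evaluating $T_2(T_1(x))(e)$, using equivariance to reconstruct $T_1(x)$ from its local rule. The $54$ lazy maps of Lemma~\ref{lem:lazycount} are listed from the pairs $(S,p)$. The $512$ units of Lemma~\ref{lem:orbits} are generated either by the count there or by filtering bijective encodings.

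\textbf{The lazy-only monoid $\cD$.} Start the search from $\id$ at length $0$. At step $k$, left-multiply every element of the frontier of length $k-1$ by each $\lambda\in\cL$. Newly found encodings are recorded with $\ell_{\cL}=k$. Because the search is breadth-first, the first length at which an element appears is its shortest word length. The search terminates when the frontier is empty, and the total count of recorded elements gives $|\cD|$, claimed to be $4\,007$. By Lemma~\ref{lem:lazycount}, no nonempty lazy product equals $\id$, so $\id$ is reached only by the empty word. Hence the semigroup of nonempty compositions is $\cD\setminus\{\id\}$, provided every other element of $\cD$ is a nonempty product, which is automatic. This gives $4\,006$. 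The maximum recorded length should be $6$.

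\textbf{Word lengths in $\cM$.} Run the same search with generating set $\cU\cup\cL$. Theorem~\ref{thm:main} and Proposition~\ref{prop:count} serve as checks. The search must reach exactly $59\,712$ elements, and every reached element must satisfy~\eqref{eq:criterion}. This confirms that the enumeration is complete, and the maximum length should again be $6$. The table of distributions is read off from the frontier sizes.

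\textbf{Sanity checks.} Two structural consistency checks are available. First, every element of $\cD$ must satisfy~\eqref{eq:noswaps}, so $\cD$ lies among the $19\,360$ maps of Proposition~\ref{prop:noswaps}. Second, a lower bound of $6$ on the length of some $T\in\cM$ could be certified independently. For example, take $T=RB_s\alpha$ with $R$ needing several collapses, and argue that each generator changes few orbits.

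\textbf{Main obstacle.} The main obstacle is certifying the maxima: showing that no shorter word exists for the extremal elements. A hand proof would need an invariant counting how many orbits a single generator can alter, and such an invariant is awkward with $\cU$ in play. I would therefore rely on the breadth-first search itself, whose minimality is built in. I would cross-validate it with the Lean formalization cited for $|\cD|$.
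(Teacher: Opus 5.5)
Your proposal is correct and follows the paper's proof. Both use an exhaustive breadth-first search from $\id$ along the edges $T\to\gamma T$, first for $\Gamma=\cL$ and then for $\Gamma=\cU\cup\cL$, with Lemma~\ref{lem:lazycount} used to pass from $|\cD|=4\,007$ to the $4\,006$ nonempty lazy compositions. The one difference is that the paper does not rely only on the minimality built into the search: an independent checker certifies the output by verifying, for every member, a generator word of length $d(T)$, closure under left multiplication by every generator, and the edge inequality $d(\gamma T)\leq d(T)+1$.
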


\begin{table}[ht]
\centering
\caption{Numbers of elements at each shortest-word length.}\label{tab:depth}
\begin{tabular}{@{}crr@{}}
\toprule
Length $k$ & $\ell_{\cL}(T)=k$ & $\ell_{\cU\cup\cL}(T)=k$\\
\midrule
0 & 1 & 1\\
1 & 54 & 565\\
2 & 678 & 8\,806\\
3 & 1\,894 & 27\,424\\
4 & 1\,218 & 19\,004\\
5 & 156 & 3\,870\\
6 & 6 & 42\\
\midrule
Total & 4\,007 & 59\,712\\
\bottomrule
\end{tabular}
\end{table}

\begin{proof}[Verification of the computation]
Number the cells $(e,a,b,ab)$ by $0,1,2,3$, so that group multiplication is bitwise exclusive-or. Encode $x\in X$ by
\[
\iota(x)=\sum_{g=0}^3x(g)2^g.
\]
Encode a local rule by an integer $r\in\{0,\ldots,65\,535\}$ whose bit in position $\iota(x)$ is $T_r(x)(e)$. Thus
\begin{equation}\label{eq:encoding}
T_r(x)(g)=\operatorname{bit}_{\iota(g\cdot x)}(r).
\end{equation}
All cellular automata occur exactly once. The identity has rule number $43\,690$.

The C++ program \nolinkurl{compute_klein_closures.cpp} performs a breadth-first search from the identity along the edges $T\to\gamma T$, first for $\Gamma=\cL$ and then for $\Gamma=\cU\cup\cL$, and replays every word it emits. For each rule it records membership, a shortest length, a parent link, and a generator word in application order, in the files \nolinkurl{lazy-membership.csv} and \nolinkurl{units-lazy-membership.csv}. The units are exactly the rules whose global maps have $16$ distinct images, and the lazy rules, listed in \nolinkurl{lazy-generators.csv}, are obtained by enumerating all pairs $(S,p)$ from Lemma~\ref{lem:lazycount}.

The Python checker \nolinkurl{verify_klein_closures.py} independently regenerates the maps and generators from~\eqref{eq:encoding}; it neither imports the search code nor uses its output summary as evidence. For each reported member set $H$ and distance function $d$, it verifies the following:
\begin{enumerate}
\item $\id\in H$, and every $T\in H$ has a certified generator word of length $d(T)$;
\item $\gamma T\in H$ for all $T\in H$ and $\gamma\in\Gamma$;
\item $d(\gamma T)\leq d(T)+1$ for every such edge, with $d(\id)=0$.
\end{enumerate}
The first check gives $H\subseteq\langle\Gamma\rangle$, and the second gives the reverse inclusion. The words give $\ell_\Gamma(T)\leq d(T)$, whereas the edge inequalities along any word give $d(T)\leq\ell_\Gamma(T)$. Thus the certificates establish both exact membership and shortest lengths. Counting the verified rows gives Table~\ref{tab:depth}. Lemma~\ref{lem:lazycount} shows that deleting the identity gives precisely the nonempty lazy compositions.
\end{proof}

The supplementary material also contains a Lean~4~\cite{lean4} formalization, built on Mathlib~\cite{mathlib} and pinned to Lean v4.33.0, of the criterion of Theorem~\ref{thm:main} (theorems \nolinkurl{KleinFour.membership_iff} and \nolinkurl{KleinFour.membership_iff_orbits}) and of the cardinalities $|\cM|=59\,712$ and $|\cD|=4\,007$ (theorems \nolinkurl{KleinFour.membership_card} and \nolinkurl{KleinFour.lazy_membership_card}). The proofs contain no \texttt{sorry} and do not use \texttt{native\_decide}; they are checked by Lean's kernel and depend only on the axioms \texttt{propext}, \texttt{Classical.choice} and \texttt{Quot.sound}. The word-length distributions in Table~\ref{tab:depth} are not part of the formalization.

\begin{remark}
The length-one entry for the mixed monoid is $565$, since the $512$ units include the identity and no lazy map is a unit. The factorization in Corollary~\ref{cor:generators} uses a smaller generating family; it does not assert the same shortest-word lengths for that family.
\end{remark}

The supplementary material is archived on Zenodo~\cite{supp} and mirrored at \url{https://github.com/the-omega-institute/klein-four-lazy-ca-supplement}. Its directory \nolinkurl{experiments} contains the two programs above, the certificate files, the file \nolinkurl{table-1.csv}, checksums in \nolinkurl{SHA256SUMS}, and the commands that rerun both searches and the checker. The Lean project builds with \nolinkurl{lake build}, and \nolinkurl{Audit.lean} prints the axioms on which the main theorems depend. All computations use exact integer operations. A necessary-and-sufficient structural criterion for membership in $\cD$ remains open.

%%%%%%%%%%%%%%%%%%%%%%%%%%%%%%%%%%%%

\section*{Acknowledgments} 
E.~Alcal\'{a}-Arroyo was supported by SECIHTI \textit{Becas Nacionales para Estudios de Posgrado}, Government of Mexico. 

%%%%%%%%%%%%%%%%%%%%%%
\section*{Declaration on the use of artificial intelligence}

The authors used several models of the GPT (OpenAI) and Claude (Anthropic) families for mathematical discovery, drafting, the Lean formalization and verification scripts, and independent checking of the results. The authors reviewed and verified all results and take full responsibility for the content of this article.


\begin{thebibliography}{9}
\bibitem{order}
E. Alcal\'{a}-Arroyo and A. Castillo-Ramirez,
\emph{On the order of lazy cellular automata},
Theoret. Comput. Sci. \textbf{1076} (2026), 115965.
\href{https://doi.org/10.1016/j.tcs.2026.115965}{doi:10.1016/j.tcs.2026.115965}.

\bibitem{generation}
E. Alcal\'{a}-Arroyo and A. Castillo-Ramirez,
\emph{When do lazy and invertible cellular automata generate all cellular automata?}, October 2026. \href{
https://doi.org/10.48550/arXiv.2610.18480}{
https://doi.org/10.48550/arXiv.2610.18480}.


\bibitem{supp}
E. Alcal\'{a}-Arroyo, A. Castillo-Ramirez, H. Ma, and W. Zhang,
\emph{Supplementary material for ``Generation by lazy and invertible cellular automata over the Klein four-group''},
Zenodo, 2026.
\href{https://doi.org/10.5281/zenodo.23264124}{doi:10.5281/zenodo.23264124}.

\bibitem{finite}
A. Castillo-Ramirez and M. Gadouleau,
\emph{Cellular automata and finite groups},
Nat. Comput. \textbf{18} (2019), no.~3, 445--458.
\href{https://doi.org/10.1007/s11047-017-9640-3}{doi:10.1007/s11047-017-9640-3}.

\bibitem{rank}
A. Castillo-Ramirez and R. H. Ruiz-Medina,
\emph{The relative rank of the endomorphism monoid of a finite $G$-set},
Semigroup Forum \textbf{106} (2023), 51--66.
\href{https://doi.org/10.1007/s00233-023-10340-7}{doi:10.1007/s00233-023-10340-7}.

\bibitem{mathlib}
The mathlib Community,
\emph{The Lean mathematical library},
in: Proceedings of the 9th ACM SIGPLAN International Conference on Certified Programs and Proofs (CPP 2020), ACM, 2020.
\href{https://doi.org/10.1145/3372885.3373824}{doi:10.1145/3372885.3373824}.

\bibitem{lean4}
L. de Moura and S. Ullrich,
\emph{The Lean~4 theorem prover and programming language},
in: Automated Deduction -- CADE 28 (A. Platzer and G. Sutcliffe, eds.), Lecture Notes in Comput. Sci., vol. 12699, Springer, 2021.
\href{https://doi.org/10.1007/978-3-030-79876-5_37}{doi:10.1007/978-3-030-79876-5\_37}.
\end{thebibliography}
\end{document}